\documentclass[11pt]{article}
\usepackage{fullpage,graphicx,amsmath,amsthm,amsfonts,subfigure,tikz,url}
\usepackage{times}
\usepackage{enumitem}
\usepackage{amsmath,amsthm,amsbsy,amsfonts,amssymb,dsfont,stmaryrd}
\usepackage{color}

\usepackage[normalem]{ulem}

\newcommand{\ignore}[1]{{}}

\newcommand{\suppress}[1]{}
\theoremstyle{plain}
\newtheorem{theorem}{Theorem}%[section]
\newtheorem{lemma}[theorem]{Lemma}

\newtheorem{proposition}[theorem]{Proposition}
\theoremstyle{definition}

\theoremstyle{remark}
\newtheorem{remark}[theorem]{Remark}
\newcommand{\zset}{\mathbb Z}

\newcommand{\rset}{\mathbb R}

\numberwithin{equation}{section}

\newcommand{\cA}{{\mathcal A}}

\newcommand{\cH}{{\mathcal H}}
\newcommand{\cL}{{\mathcal L}}

\newcommand{\cS}{{\mathcal S}}
\newcommand{\cT}{{\mathcal T}}

\newcommand{\be}{\begin{equation}}
\newcommand{\ee}{\end{equation}}
\newcommand{\bea}{\begin{eqnarray}}
\newcommand{\eea}{\end{eqnarray}}
\newcommand{\bean}{\begin{eqnarray*}}
\newcommand{\eean}{\end{eqnarray*}}
\newcommand{\da}{\dagger}

\DeclareMathOperator{\spec}{spec}

\usepackage[bb=boondox]{mathalfa}
\newcommand{\one}{\mathbb{1}}

\renewcommand{\cS}{{\mathcal Q}}
\newcommand{\cSin}{{\mathcal S}}

\newcommand{\hrf}{h^{\text{ref}}}
\newcommand{\Hrf}{\cH^{\text{ref}}}
\newcommand{\murf}{\mu^{\text{ref}}}

\newcommand{\tmix}{\tau_{\rm mix}}
\newcommand{\tcoup}{\tau_{\rm coup}}
\newcommand{\hit}{\tau_{\rm hit}}
\newcommand{\TVD}[2]{\Vert #1 - #2 \Vert_{\rm TV}}

\newenvironment{proofof}[1]{{\par\bigskip\noindent \em Proof of #1.}\/}{\hfill\qed\bigskip}

\usepackage{hyperref,xcolor}
\hypersetup{
  colorlinks=true,
  linkcolor=blue,
  linktoc=all,
  citecolor=blue,
  urlcolor=blue,
  bookmarksnumbered=true
}
\usepackage{mathtools}
\mathtoolsset{showonlyrefs}
\begin{document}
\title{Mixing times and spectra of non-equilibrium symmetric exclusion processes on general graphs}

\author{Leonard J. Schulman\thanks{Division of Engineering and Applied Science, California Institute of Technology, Pasadena CA 91125, USA. schulman@caltech.edu. Supported in part by NSF grant CCF-2321079.} \and
             Alistair Sinclair\thanks{Computer Science Division, University of California, Berkeley CA 94720, USA.  sinclair@cs.berkeley.edu.  Supported in part by NSF grant CCF-223109.}}

\date{\today}

\maketitle
\thispagestyle{empty}

\abstract{The symmetric exclusion process (SEP) is a classical model of interacting particles on a graph,
in which multiple particles execute random walks subject to the constraint that no two particles may occupy
the same vertex.  The conservative version of the process, in which the number of particles is fixed, is by
now very well understood, as are numerous other equilibrium models in statistical physics, through the study
of reversible Markov chains.   

In this paper,
we study the \emph{non-equilibrium} version of the SEP, in which some vertices of the graph interact
with external ``heat baths'', held at different temperatures, that mediate transport through the graph.  
The resulting steady-state distribution
is still the stationary distribution of a natural Markov chain, but the chain is no longer reversible.  This means
that even the steady-state distribution is very hard to describe, and indeed is the subject of a rich
literature in statistical physics, which has focused almost exclusively on finite subsets of the lattice~$\zset^d$.  
Very little is known about other important quantities, notably the {\it mixing time}, i.e., the time to reach steady-state.

We derive various quantitative results about the non-equilibrium SEP for arbitrary graphs with
arbitrary heat baths.  Our main result is a bound
on the mixing time in terms of the worst-case hitting time of a single particle to any of the heat baths, which is a
much more accessible quantity; this bound is tight up to two logarithmic factors in the size of the graph.  We
then use this result to derive a simple algorithm, running in time roughly $n^{O(k)}$, that computes the steady-state
joint occupation distribution for any set of $k$ vertices; given that the steady-state distribution is very elusive,
this provides a potentially useful tool to study its $k$-wise correlations for small values of~$k$.  Finally, we prove
that the spectrum of the Markov chain associated with the non-reversible dynamics for the SEP is independent
of the heat bath temperatures; thus in particular the spectrum is always real, despite the fact that the rate matrix
of the dynamics is not reversible, or even diagonalizable.}
\newpage
\setcounter{page}{1}

\section{Introduction}
A major focus in statistical mechanics is the computation of properties of the Gibbs or \emph{equilibrium} steady-state distribution of a physical system (generally expressible through a local Hamiltonian). For this reason, over the last four decades, extensive effort (much of it in theoretical computer science) has been devoted to the development of algorithms, particularly Markov Chain Monte Carlo (MCMC), to sample from such Gibbs distributions.  The resulting Markov chains (e.g., Glauber dynamics) are naturally reversible with respect to the Gibbs distribution.

Equilibrium steady states are, however, but a narrow portion of the physical states of interest. The steady state of a typical physical system is not an equilibrium: rather, in many cases the system is held out of equilibrium by external boundary conditions. For example, a body of water often has a salinity gradient; an insulating barrier typically has a temperature gradient, and so on. In such systems, in steady state there is continual transport (of salt, heat etc.).  The steady-state distribution is again the stationary distribution of a natural Markov chain, which however is {\it non-reversible}.  In contrast to the equilibrium setting, where there is an extensive literature analyzing the associated reversible Markov chains, the analysis of the dynamics for non-equilibrium systems has received relatively little treatment. The current paper is devoted to remedying this deficiency in one famous system from mathematical physics: the symmetric exclusion process (SEP).

The SEP~\cite{Spitzer70,Liggett85} is a classical model of interacting particles
in which multiple unlabeled particles execute random walk on a connected, undirected 
graph subject to the constraint that no two particles may occupy the same vertex.  Equivalently, since the particles are unlabeled,
we may think of an {\it exchange\/} process driven by independent Poisson clocks
on the edges of the graph: when the clock on edge $\{u,v\}$ rings, the contents of vertices $u,v$ are exchanged.
In the classical, conservative model the number, $k$, of particles is conserved and the dynamics is
reversible with equilibrium distribution that is uniform over all $n\choose k$ possible choices of occupied 
vertices (where $n$ is the number of vertices in the graph).  
The mixing time of this dynamics has received much attention over many
years and is by now very well understood (see, e.g., \cite{Morris06,CLR10,Oliveira13,Lacoin1,Lacoin2,HP20}).

In this paper we study the much less well understood {\it non-equilibrium\/} setting, in which in addition an arbitrary
subset of vertices interacts with external ``heat baths": these can be viewed as reservoirs with fixed occupation
probabilities (which we refer to as ``temperatures"~$h_v \in [0,1]$).  When a vertex~$v$ interacts with its heat bath (triggered again
by a Poisson clock), an exchange is performed between~$v$ and the bath, which is at any time occupied with
fixed probability~$h_v$.  Thus the heat baths continuously inject or remove particles from the system at a rate
that depends on their clocks and their temperatures.  In this scenario, the SEP is in general no longer reversible, 
and is held out of equilibrium by the heat baths, which mediate net transport across the system.  
The system still converges to a steady-state distribution, which however is typically very
hard to describe due to the absence of reversibility.

The non-equilibrium SEP---as well as its asymmetric variants---has been extensively investigated, especially
in the statistical physics community.  However, the focus there has been almost exclusively on the special case 
where the graph is a finite subset of~$\zset^d$, and indeed even more specifically on the one-dimensional case of
a finite path.  Moreover, the main emphasis has been on deriving structural properties of the steady-state
distribution, such as correlations between vertex occupancies, and transport rates: even in the simple case
of a path with one heat bath at each endpoint, there is no closed-form expression for the steady-state
distribution (though a classical paper of Kingman~\cite{Kingman69} derives an expression for the joint occupation
probability for any $k$ vertices).  For an account of the rich literature on these topics, we refer the reader to
\cite{Spohn83,BertiniDGJL03,BertiniDGJL07,Derrida07,ChouMZ11} and the references therein.

In this paper we focus on the {\it mixing time\/} of the non-equilibrium SEP, a question that has so far
received little attention, with the exception of two recent papers.  
Goncalves {\it et al.}~\cite{Goncalves} and 
Nestoridi {\it et al.}~\cite{Nestoridi} give very precise bounds on the mixing time (including the cutoff phenomenon)
for the exclusion process on a \emph{path} with heat baths at the endpoints.  In a closely related recent
paper, Salez~\cite{Salez23} obtains 
similarly tight results for the mixing behavior of the (equilibrium) SEP on {\it arbitrary\/} graphs with heat baths, but under the
strong assumption that all heat baths are held at the {\it same\/} temperature: this results in a reversible dynamics,
which is why the system is still in equilibrium. The steady-state distribution here is just a product distribution
over the vertices. 

All the above leaves unaddressed the general case of arbitrary heat baths on non-path graphs,
in which the dynamics is non-reversible and the steady-state distribution is much more complex. In this paper we
give two-sided bounds on the mixing time of the non-equilibrium SEP on general graphs, as well as related
results on computing the steady-state distribution and on the spectrum of the generator of the SEP.  
Our main result is a bound on the mixing time in terms of the
maximum expected hitting time~$\hit$ of a single random walk on the graph from any vertex to a heat bath; 
this quantity readily yields an upper bound on mixing time, but it is more delicate to show that this upper bound
is in fact tight up to logarithmic factors in the size of the graph. We emphasize that our bounds are independent
of the temperatures of the heat baths, and dependent only on their locations: in particular, the mixing time for
any choice of heat bath
temperatures is (up to log factors) the same as for the case when all temperatures are equal, which is the reversible
case treated by Salez~\cite{Salez23}. However, unlike Salez,
we do not establish a cut-off phenomenon for the non-reversible dynamics. 

Next we leverage this result to obtain an algorithm with running time $n^{O(k)}\log \hit$ that, for any graph
and for any heat baths, 
computes the steady-state joint distribution of any subset of $k$~vertices to any specified degree of accuracy.
This algorithm is based on an extension of Liggett's duality theory for exclusion processes~\cite{Liggett85} 
to allow for the presence of heat baths. We believe that this algorithm, for small values of~$k$, 
may be useful in investigating properties of the steady-state distribution, in networks where closed-form
estimates are unavailable (which, to the best of our knowledge, includes all cases except the path
and some subsets of~$\zset^d$ with appropriately positioned heat baths).

Finally, we prove a result of a similar flavor for the spectrum of the Markov chain: namely, 
the spectrum of an SEP with heat baths is independent of the heat bath temperatures.  Since this
includes the reversible case (equal-temperature heat baths), where the spectrum is real, we deduce
that the spectrum of an arbitrary non-equilibrium SEP is always real.  This result seems surprising
in light of the fact that these Markov chains are not only not reversible, but in general not even
diagonalizable. In the reversible case treated by Salez~\cite{Salez23}, the spectrum dictates the mixing time;
however, for non-reversible Markov chains it is known that the spectrum may give essentially no
information about the mixing time~\cite{MehtaS20}, so our spectral result is not immediately
connected to our mixing time result.  We leave potential applications as an open question; in particular,
we point out that, while our result on the spectrum makes use of the continuity of eigenvalues
as a function of the transition matrix, allowing us to relate the system with arbitrary heat baths
to the reversible case, such continuity is not shared by the eigenvectors and thus our method
apparently does not directly shed light on the steady-state distribution of the non-reversible system.

%%%%%%%%%%%%%%%%%%%%%%%%%%%%%%%%%%%%%%%%%%%%%%%%%%%%%

\section{Framework and Results} \label{sec:fmwkresults}
\subsection{Framework}\label{subs:fmwk}
Let $G=(V,E)$ be a connected undirected graph with vertex set~$V$ of size~$n$, and edge weights
$\eta_{\{u,v\}}>0$ on each edge~$\{u,v\}$.  The vertices of~$G$ are the sites of the system, and interact
with their neighbors at rates specified by the edge weights.  A subset $V'\subseteq V$ of sites also
interact with external heat baths at rates $\xi_v> 0$ for $v\in V'$.  To avoid trivialities we always assume that $V'$ is non-empty.

At any given time, sites in the SEP may be either occupied by a single particle or unoccupied.  Thus the
state of the process at time~$t$ is described by a vector $\sigma_t=\{\sigma_t(v)\}\in \{0,1\}^V$ which we
call a {\it configuration}, where the random
variable $\sigma_t(v)$ takes the value~1 or~0 if vertex~$v$ is occupied or unoccupied, respectively.  This configuration
evolves in time under the following continuous time exclusion dynamics.  Each edge~$\{u,v\}$ is equipped
with an independent Poisson clock with rate~$\eta_{\{u,v\}}$; when the clock rings, the values $\sigma(u),\sigma(v)$ are
exchanged: i.e., if there is a particle at~$u$ it moves to~$v$ and vice versa.  Thus the observable state of
the system is affected by such an exchange operation only if exactly one of $u,v$ is occupied at the time
of the operation.

The connection of each site $v\in V'$ with its external heat bath is also governed by an independent Poisson
clock of rate~$\xi_v$; when this clock rings, an exchange
operation is performed between~$v$ and the heat bath, which is at any time unocuppied or occupied
according to a fixed 2-point probability distribution $h_v=(h_v(0),h_v(1))$, with $h_v(0)=1-h_v(1)\in [0,1]$.
The state of the heat bath is unaffected by such an exchange.  We make the further non-triviality assumption
that there exist (not necessarily distinct) heat baths $u,v$ such that $h_u(0)>0$ and $h_v(1)>0$.  (Thus we
rule out the degenerate case in which all heat baths are uniformly occupied, respectively uniformly unoccupied:
in these cases the dynamics absorbs into the fully occupied (respectively, unoccupied) configuration.)

We note that, in the absence of heat baths, the above dynamics coincides with the classical, closed SEP,
where the number of particles in the system is conserved over time; the dynamics is then also reversible
w.r.t.\ the uniform steady-state distribution over all legal placements of the particles.  In the presence of heat baths,
the dynamics becomes in general non-reversible as transport occurs between heat baths,
and its steady-state distribution~$\mu$ may consequently be very hard to describe.  
However, in the special case that all heat baths are identical and non-trivial (i.e., the 2-point probability distribution
at every vertex~$v$ with $\xi_v>0$ is $h_v=(h(0),h(1))$ for some fixed~$h$ with $h(0)=1-h(1)\in (0,1)$),
it is easily checked (see Proposition~\ref{prop:basic})
that the dynamics is ergodic and reversible w.r.t.\ the steady-state product
distribution $\mu(\sigma)=\prod_{v\in V} h(\sigma(v))$.  

For later reference in our study of the spectrum of this process, we now give an equivalent algebraic description.
Let $S$ denote the $\{0,1\}^2\times\{0,1\}^2$ transition matrix corresponding to the exchange operation on 
a pair of adjacent vertices, i.e., if the values of the configuration at the vertices before and after the exchange are $a,b$ and $c,d$,
respectively, then
\be S_{a,b}^{c,d} := \begin{cases} 1 & \text{ if } a=d \text{ and } b=c; \\ 
0 & \text{ otherwise. }\end{cases} \label{exchop} \ee
(Here, the subscript is a row index and the superscript is a column index.)  We write $S(\{u,v\})$ for $S$
applied to the edge $\{u,v\}$, i.e., $S(\{u,v\})$ exchanges the values of $\sigma(u)$ and~$\sigma(v)$.

We represent the heat-bath interactions using another set of matrices $H(v)$, one
for each $v\in V$.  Each $H(v)$ is a $\{0,1\}\times\{0,1\}$ rank-1 matrix of the form
\be H(v)=\one \cdot h_v, \label{mmylss} \ee
where $h_v=(h_v(0),h_v(1))$ with ($h_v(1)=1-h_v(0)$) is a row vector whose components are the probability of
non-occupation/occupation of the heat-bath at~$v$; i.e., $H(v)$ replaces the value~$\sigma(v)$ by 
a $\{0,1\}$ value drawn from the distribution~$h_v$.

We collect the infinitesimal dynamics in two $\{0,1\}^V \times \{0,1\}^V$ matrices.
The first matrix represents solely the internal dynamics:
\begin{equation}\label{Sin}
 \cSin := \sum_{\{u,v\}\in E} \eta_{\{u,v\}} \cdot S(\{u,v\}) \otimes I(V\backslash \{u,v\}),
\end{equation}
where $I(V\backslash \{u,v\})$ is the identity operator on the space $\{0,1\}^{V\backslash \{u,v\}}$. 
Observe that the internal dynamics have a conserved quantity, namely the total occupancy $\sum_{v\in V} \sigma_t(v)$. 

The second matrix represents the external, heat bath interactions:
\begin{align} \cH &:= \sum_{v \in V'} \xi_{v} \cdot H(v) \otimes I({V\backslash \{v\}})
\label{Hh}. \end{align}
Now the rate matrix \be \cS:=\cSin+\cH \label{cSdef} \ee
defines a continuous-time dynamics (a Markov chain, in general non-reversible) via the infinitesimal generator $\cL := \cS-cI$,
where $I$ is the identity matrix on $\{0,1\}^V$ and $c=\sum_{\{u,v\}\in E} \eta_{\{u,v\}}+\sum_{v\in V'} \xi_{v}$.  
Note that the internal component~$\cSin$, and the heat bath rates~$\xi_v$ are
fixed for a given graph~$G$, while the heat bath temperatures~$\cH$ may be viewed
as variable boundary conditions.

%%%%%%%%%%%%%%%%%%%%%%%%%%%%%%%%%%%%%%%%%%%%%%%%

\subsection{Results}\label{subs:results}
Our first main result is a tight (up to logarithmic factors) bound on the rate of convergence to stationarity
of the SEP on any graph with arbitrary heat baths.  
To state this result, we define the mixing time of the SEP as follows.  Let $p^{(t)}$ denote the probability distribution
of the configuration~$\sigma_t$ at time~$t$, and $\mu$ the steady-state distribution.  Then we define
\begin{equation}
   \tmix(\varepsilon) := \max_{\sigma(0)}\inf\{t:\TVD{p^{(t)}}{\mu}\le\varepsilon\},
\end{equation}
where $\Vert \cdot\Vert_{\rm TV}$ denotes total variation distance.  Sometimes, slighly informally, we shall 
refer to the {\it mixing time\/}~$\tmix$ as $\tmix(\varepsilon)$ for a small, fixed constant~$\varepsilon$.

Our bound on the mixing time will be phrased in terms of the maximum expected hitting time to a heat bath
of a continuous time random walk that is a single-particle version of the SEP.
More precisely, consider the
following random walk on~$G$, driven by the edge weights $\eta_{\{u,v\}}$ and heat bath weights~$\xi_v$.
The walk is executed by a single particle, starting at some vertex~$v_0$.  Whenever an edge or heat
bath clock rings, if the corresponding edge is incident on the current position of the particle, the particle
moves along that edge; if  the edge is a heat bath edge, the particle is absorbed by the corresponding
bath.  Define $\hit$ to be the maximum expected hitting time from any initial vertex~$v_0$ to the set of 
heat baths.  Our first result shows that~$\hit$ is, up to factors of $\log|V|$,  a tight bound on the mixing time
for any SEP with arbitrary heat baths.

\begin{theorem}\label{thm:mixing}
For any SEP with arbitrary heat baths on an $n$-vertex connected graph, the mixing time is $O(\hit\log n)$
and is also $\Omega(\hit/\log n)$.  More precisely:
\begin{enumerate}
\item[(a)] $\tmix(\varepsilon)\le 2\log_2(n\varepsilon^{-1})\hit$ for any $\varepsilon\in(0,1)$;
\item[(b)] $\tmix(1/40) > \hit/a\ln n$ for some universal constant~$a>0$.
\end{enumerate}
\end{theorem}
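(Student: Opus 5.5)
For the upper bound (a) we use the exchange (stirring) representation together with a grand coupling. Drive two copies of the process, started from arbitrary configurations $\sigma_0,\sigma_0'$, with the same edge clocks, the same heat-bath clocks, and the same bath outcomes (at each ring of the clock at $v\in V'$, one bit is drawn from $h_v$ and written into both copies). Trace the value at a vertex $v$ at time $t$ backwards in time through the exchanges. The backward trajectory is a continuous-time random walk with exactly the single-particle law used to define $\hit$, since edge clocks are time-reversal invariant Poisson processes. If this backward walk reaches a heat bath before time $0$, then $\sigma_t(v)$ equals the bath bit drawn at that ring. This bit is common to both copies, so $\sigma_t(v)=\sigma'_t(v)$. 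Hence the coupling fails at $v$ only if the backward walk from $v$ has not been absorbed by time $t$.

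We bound this failure probability by Markov's inequality and the Markov property. From any start, the probability of not being absorbed within time $2\hit$ is at most $1/2$. Restarting at successive blocks gives $\Pr[\text{not absorbed by } 2k\hit]\le 2^{-k}$. A union bound over the $n$ vertices yields
\[
\TVD{p^{(t)}}{\mu}\le \Pr[\sigma_t\ne\sigma_t']\le n2^{-t/(2\hit)}.
\]
Here we take $\sigma_0'$ distributed according to $\mu$, or equivalently bound the distance between any two starting configurations and use convexity. This is at most $\varepsilon$ once $t\ge 2\log_2(n/\varepsilon)\hit$, which is (a).

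For the lower bound (b) we want a statistic that is still far from its stationary law at time $\hit/(a\ln n)$. Pick a vertex $v_0$ whose expected hitting time is $\hit$. A single walk need not stay unabsorbed with constant probability up to time $\hit/C$, since the hitting-time distribution can be heavy-tailed. So the first step is a quantile statement. There exist a vertex $w$ and a time $s\ge \hit/(c\ln n)$ such that the walk from $w$ survives to time $s$ with probability at least $1/2$. The plan for this is as follows: if every start were absorbed by time $s$ with probability at least $1/2$, then by the restart argument above the expected hitting time would be at most $2s$, and hence $\hit\le 2s$. This already gives the quantile bound without any logarithm, namely $s=\hit/4$.

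The log factor enters when we turn single-particle survival into a distinguishing statistic. By the duality (reverse the exchanges), $E[\sigma_t(w)]$ from initial configuration $\sigma_0$ equals $\sum_u P_t(w,u)\sigma_0(u)+(\text{bath contribution})$, where $P_t$ is the subprobability kernel of the unabsorbed walk. Compare the two starting configurations all-ones and all-zeros. Their first moments at $w$ differ by $P_t(w,V)\ge 1/2$ for $t\le s$. A difference in the marginal at one vertex directly gives total variation at least $1/4$ between the two time-$t$ distributions. Then one of them is at distance at least $1/8$ from $\mu$, which contradicts mixing to $1/40$. This would in fact give $\tmix\ge \hit/4$ without any log.

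The main obstacle is checking this duality step rigorously, in particular that the bath contribution is identical for both starting configurations, which holds because it depends only on clocks and bath bits. If the statement's $\log n$ loss is needed somewhere, it would be in relating the worst-case $\hit$ to a typical survival time. I would first try the direct argument above and fall back to a $\ln n$-lossy tail bound only if that relation fails.
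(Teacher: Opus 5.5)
Your proposal is correct. Part (a) is essentially the paper's proof: the paper uses the same shared-clock, shared-bath-bit coupling, tracking the $n$ initial particles forward with its red/black coloring where you trace each site backward, and then applies the same Markov-plus-restart and union bounds.

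Part (b), however, takes a genuinely different and much shorter route. The paper reasons only with forward particle trajectories. It forms the level sets $S_k=\{v:\tau(v)>(1-k/\log_c n)\hit\}$ and chooses a scale $K$ with $|S_{K+1}|\le c|S_K|$. It then combines a Markov-property decomposition of $\tau(v)$ for $v\in T_K=S_K\setminus S_{K-1}$ with the exclusion constraint (one particle per site). This shows that a $1/20$ fraction of $S_{K+1}$ still carries black particles at time $\hit/(a\ln n)$. The $\ln n$ is paid because $t$ must be small compared with the level spacing $\hit/\log_c n$.

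You instead use a single-site statistic and time reversal of the graphical construction (equivalently, symmetry of the killed kernel $P_t$). This gives the exact identity that $w$ holds a black particle at time $t$ with probability $\Pr_w[\tau>t]$. Your restart argument then supplies a $w$ with $\Pr_w[\tau>t]>1/2$ for every $t<\hit/2$. This yields $\TVD{p^{(t)}_{\mathbf 1}}{p^{(t)}_{\mathbf 0}}>1/2$ for all such $t$, hence $\tmix(1/40)\ge\hit/4$. That exceeds $\hit/(a\ln n)$ for all $n\ge 2$ once $a\ge 6$.

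So the hedge in your last paragraph is unnecessary. The logarithm in (b) is an artifact of the paper's forward-only argument. Your route proves the stronger bound $\tmix=\Omega(\hit)$, making the two bounds tight up to a single $\log n$ factor; for the path it gives $\Omega(n^2)$ rather than $\Omega(n^2/\log n)$.

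You use one fact tacitly: $t\mapsto\TVD{p^{(t)}}{\mu}$ is non-increasing for each fixed start, which is needed to pass from a bad time to a bound on $\tmix$. This is standard and is equally tacit in the paper.
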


We prove this theorem in Section~\ref{sec:mixing}.  The upper bound is a straightforward coupling argument, 
based on the idea that mixing occurs once the influence of the heat baths has reached every vertex.  The lower
bound is more delicate due to the need to construct a suitable test function.

As mentioned earlier, in the non-equilibrium setting the steady-state distribution on general graphs is
hard to describe due to the absence of reversibility.  However, using a generalization of Liggett's duality
theory~\cite[Chap.~VIII, Thm.~1.1]{Liggett85}, we can describe the joint
distribution at time~$t$ of the exclusion process at any $k$ vertices in terms of the absorption 
probabilities of an exchange process with $k$ particles: see Section~\ref{sec:correlations} for details. 
This generalization is apparently folklore but to the best of our knowledge has not been explicitly written down anywhere;  
however, see~\cite{KipnisMP82} for an analogous application of duality in a different exchange process.  
Combining duality with our upper bound on mixing time, we obtain a polynomial time 
algorithm for computing any such $k$-wise joint distribution for any fixed~$k$, as stated in our next result.
The algorithm works by computing the time evolution of $\hit\log(n/\varepsilon)$ steps of the $k$-particle
process, which turns out to be a Markov chain on a much smaller state space of size only~$(n+2)^k$.
\begin{theorem}\label{thm:kwise}
For any SEP with arbitrary heat baths, there is an algorithm running in time $n^{O(k)}(\log \hit +\log\log\varepsilon^{-1})$ that
computes the joint occupation distribution of any $k$ vertices in~$G$ within variation distance~$\varepsilon$, 
for any $\varepsilon\in(0,1)$.
\end{theorem}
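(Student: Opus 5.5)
The plan is to combine a duality representation with the mixing bound of Theorem~\ref{thm:mixing}(a). \textbf{Step 1 (duality).} Fix a set $A=\{v_1,\dots,v_k\}$ of vertices and a target pattern $x\in\{0,1\}^A$. We introduce a dual process of $k$ labeled particles started at $v_1,\dots,v_k$. These particles perform the exchange dynamics driven by the \emph{same} edge clocks, with time reversed. When the bath clock at $v$ rings while a particle sits at $v$, that particle is absorbed into bath $v$; after that it stays put. The dual state space is therefore $(V\cup\{\text{baths}\})^k$, or a coarsening of size at most $(n+2)^k$ once we record only which temperature class absorbed a particle.

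Using the graphical construction (Poisson clocks run backwards from time $t$), the value $\sigma_t(v_i)$ is determined as follows. If dual particle $i$ has not been absorbed by time $t$, then $\sigma_t(v_i)=\sigma_0(\text{position of particle } i)$. If it was absorbed at bath $w$, then $\sigma_t(v_i)$ is an independent draw from $h_w$. Conditional on the dual trajectory, the bath draws are independent: each bath-clock ring produces a fresh sample, and distinct particles are absorbed at distinct rings. Hence
\[
\Pr[\sigma_t|_A=x]=\mathbb{E}\Bigl[\prod_{i\ \text{absorbed at } w_i} h_{w_i}(x_i)\cdot \one\{\sigma_0 \text{ agrees with } x \text{ on unabsorbed positions}\}\Bigr].
\]
Taking $t\to\infty$ gives the steady-state marginal $\mu|_A$. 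It is the expectation of $\prod_i h_{w_i}(x_i)$ over the absorption sites of the $k$-particle chain, where all particles are eventually absorbed almost surely because $V'\neq\emptyset$.

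\textbf{Step 2 (truncation).} Each dual particle, viewed alone, is exactly the single-particle walk defining $\hit$. By Markov's inequality and restarting, as in the proof of Theorem~\ref{thm:mixing}(a), it is unabsorbed at time $2\hit\log_2(k/\varepsilon)$ with probability at most $\varepsilon/k$. A union bound then shows that all $k$ particles are absorbed by time $T=2\hit\log_2(2k/\varepsilon)$ except with probability $\varepsilon/2$. So the absorption-site distribution at time $T$ determines $\mu|_A$ to within $\varepsilon/2$ in total variation, for every pattern $x$ simultaneously.

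\textbf{Step 3 (computation).} We need the distribution at time $T$ of a continuous-time chain with generator $\cL_k$ on $N\le (n+2)^k$ states. We uniformize: set $P=I+\cL_k/c$, so that the process at time $T$ is $P^{J}$ with $J\sim\mathrm{Poisson}(cT)$. Alternatively we use the discrete chain $P$ for $cT$ steps; the hitting-time bound transfers to $P$ up to constants. We then compute $P^{m}$ with $m=O(cT)$ by repeated squaring, which takes $O(\log m)$ matrix products, each of cost $N^{O(1)}=n^{O(k)}$.

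The quantity $\log m=O(\log\hit+\log\log(k/\varepsilon)+\log c)$ matches the claimed bound. This uses the normalization $c\le\mathrm{poly}(n)$, or more generally that rates are absorbed into $\hit$. Numerical precision is handled by computing with $O(\log(m/\varepsilon))$ bits of precision, since stochastic matrix products are contractive in $\ell_1$.

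The main obstacle is Step 1. We must verify carefully that the backward graphical construction yields exactly the dual $k$-particle absorbing exchange process, and that absorbed particles get independent bath samples. With labeled particles, two particles can never be absorbed by the same bath ring, because exclusion keeps them on distinct sites. Once this is in place, the rest is bookkeeping together with the hitting-time estimate already used in Theorem~\ref{thm:mixing}(a).
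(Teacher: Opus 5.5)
Your proposal is correct and follows essentially the same route as the paper: a Liggett-type duality reduces the $k$-site marginal to a $k$-particle absorbing exchange chain on $n^{O(k)}$ states, you truncate at time $O(\hit\log(\cdot/\varepsilon))$ using the same Markov-plus-restart hitting-time argument as Theorem~\ref{thm:mixing}(a), and you finish by repeated squaring of the transition matrix. Your version is if anything more careful (a union bound over only the $k$ dual particles, explicit uniformization, and the $\log c$ rate-normalization caveat the paper leaves implicit), and the one slip is cosmetic: recording absorption sites gives at most $(2n)^k$ states, so to get $(n+2)^k$ you should record the sampled $\{0,1\}$ label as the paper does, not a ``temperature class.''
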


Finally we turn to the spectral result outlined in the introduction.
Given an SEP defined by the rate matrix~\eqref{cSdef} above, 
with heat baths $H(v)=\one\cdot h_v$,
consider a second SEP with the same internal dynamics~$\cSin$ and heat bath interactions~$\xi_v$
but different heat baths~$H'(v)=\one\cdot h'_v$; its corresponding rate matrix is then
\begin{equation}\label{cSdef'}
 \cS'  = \cSin+\cH' = \cS +\cH' -\cH.
\end{equation}
The following theorem says that the spectrum of any SEP is invariant under changes of the
heat bath temperatures.  Let $\spec$ denote the multiset of eigenvalues of a matrix.

\begin{theorem} \label{septhm}
For any SEP~$\cS$ on a connected graph with $\sum_v\xi_v>0$, 
the spectrum is unaffected by the driving heat baths, 
i.e., for any $\cS'$ as in~\eqref{cSdef'}, we have $\spec(\cS')=\spec(\cS)$. 
\end{theorem}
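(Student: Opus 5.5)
Our approach is to find a basis of $\mathbb{R}^{\{0,1\}^V}$ in which $\cS$ is block upper triangular. In that basis the diagonal blocks will not depend on the heat-bath vectors $h_v$ at all, only on $\cSin$ and the rates $\xi_v$. Since the spectrum of a block-triangular matrix is the union of the spectra of its diagonal blocks, $\spec(\cS)=\spec(\cS')$ follows at once.

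The natural basis comes from writing each single-site space $\mathbb{R}^{\{0,1\}}$ in the basis $\{\one, e\}$, where $\one=(1,1)^T$ and $e=(1,-1)^T$ (or any vector annihilated by the row vectors $h$; since $h_v(0)+h_v(1)=1$, only the relation $h_v\one=1$ is needed). Acting on column vectors, $H(v)\one=\one$ and $H(v)e=(h_v e)\one$. So $H(v)$ maps $e$ into $\mathrm{span}(\one)$, with the $h$-dependence appearing only in this off-diagonal coefficient. On $V$, take the product basis $f_A=\bigotimes_{v\in A}e_v\otimes\bigotimes_{v\notin A}\one_v$ for $A\subseteq V$. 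The internal part $\cSin$ preserves each subspace $W_k=\mathrm{span}\{f_A:|A|=k\}$, because $S(\{u,v\})$ merely swaps tensor factors (with $S$ symmetric, left and right actions agree). For the heat-bath term we compute $\xi_vH(v)f_A$: if $v\notin A$ it equals $\xi_vf_A$, which is diagonal and $h$-independent; if $v\in A$ it equals $\xi_v(h_ve)f_{A\setminus\{v\}}\in W_{k-1}$. Hence, with respect to the filtration $W_0\subset W_0\oplus W_1\subset\cdots$, the matrix $\cS$ is block triangular. Its diagonal block on $W_k$ is $\cSin|_{W_k}+\sum_{v}\xi_v\,\mathrm{diag}(\one[v\notin A])$, which does not involve $h$. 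Taking the union over $k$ of the block spectra (as multisets) gives the theorem. Spectra are basis-independent, and the left/right row-vector convention only transposes the picture, so it does not affect the conclusion.

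The main obstacle is getting the conventions right. The paper uses row-vector (left) action, with $H(v)=\one\cdot h_v$ so that $\one$ is a column vector, and one must check carefully on which side the invariant flag lives. Acting on the right (column vectors), $H(v)$ sends $e\mapsto (h_ve)\one$ and fixes $\one$. This is exactly the triangularity we need, and the transpose has the same spectrum, so we will work with column vectors throughout. We also need to confirm that $S$ commutes with the change of basis, which holds because $S$ is a permutation of tensor factors and the single-site basis is the same at every vertex. The hypotheses (connectivity, $\sum\xi_v>0$) are not needed for this argument. The paper's mention of continuity of eigenvalues suggests the authors used a deformation argument instead, but the direct triangularization should suffice and is the route I would try first.
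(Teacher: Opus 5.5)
Your proof is correct, and it takes a more direct route than the paper's. The paper never writes down a triangular form. Instead it fixes the reference bath $\hrf=(1/2,1/2)$ and introduces the ladder operator $\cT=\sum_v T(v)\otimes I(V\backslash\{v\})$ with $T=-\frac12\one\cdot\one^\dagger$. It verifies $[\cS,\cT]=0$ and $[\cA,\cT]=\cA$ through the local conditions of Lemma~\ref{liftLemma}, and then concludes from $e^{r\cT}(\cS+\cA)e^{-r\cT}=\cS+e^{-r}\cA$ together with continuity of the spectrum as $r\to\infty$.

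Your basis shows what that machinery encodes. Since $T\one=-\one$ and $Te=0$, the operator $\cT$ is diagonal in your basis, with $\cT f_A=(|A|-n)f_A$; it is the grading operator of $\bigoplus_k W_k$. The reference dynamics is block diagonal for this grading because $\hrf e=0$, and $\cA$ lowers the degree by one because $a_v\one=0$. This is exactly your block triangularity.

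Your version gains several things:
\begin{itemize}
\item It avoids the limiting and continuity argument.
\item It compares any two heat baths directly, without passing through a reference.
\item It gives the spectrum explicitly as the union over $k$ of the spectra of $\cSin|_{W_k}+D_k$. Each block is real symmetric in the basis $\{f_A\}$, being a sum of symmetric permutation matrices plus a diagonal. So reality of the spectrum is immediate rather than inherited from the reversible case.
\item After subtracting $cI$, the block on $W_k$ is the generator of $k$-particle exclusion killed at rate $\sum_{v\in A\cap V'}\xi_v$, i.e.\ the unabsorbed part of the dual process of Section~\ref{sec:correlations}.
\end{itemize}
It also reproduces the paper's three-site example $(4,2,2,2,2\pm\sqrt2,1\pm\sqrt3)$.

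What the paper's formulation buys is a packaged sufficient condition (Lemmas~\ref{STA} and~\ref{liftLemma}) that the authors intend to apply to other boundary-driven processes. Even that lemma could be proved by your reasoning: its hypotheses say that $\cS$ preserves the generalized eigenspaces of $\cT$, while $\cA$ moves each one to the eigenspace whose eigenvalue differs by $c$.

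One small slip, which does not affect the argument. Your parenthetical about choosing $e$ ``annihilated by the row vectors $h$'' is not generally possible, since only $\hrf=(1/2,1/2)$ kills $(1,-1)^T$. It is also not needed. Any fixed $e\notin\mathrm{span}(\one)$, used at every site, works, because each $H(v)$ fixes $\one$ and has range $\mathrm{span}(\one)$.
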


To interpret Theorem~\ref{septhm}, let us set $h_v = \hrf$ for all $v\in V$,
where $\hrf$ is a strictly positive probability vector; we call~$\Hrf$, the global heat bath matrix
defined by $\hrf$, the ``reference" heat bath.  This ensures that the dynamics is reversible w.r.t.\ 
the product distribution $\murf(\sigma)=\prod_{v\in V}\hrf(\sigma(v))$.
Since our connectedness assumption also ensures that the dynamics is irreducible (see
Proposition~\ref{prop:basic}), this implies that it is ergodic with steady-state distribution~$\murf$. 
Moreover, the rate matrix $\cS=\cSin+\Hrf$ corresponding to the reference boundary
condition has real spectrum. 
 In particular, the Laplacian $\cL=\cS - cI$ 
corresponding to the reference boundary condition has principal eigenvalue~0 of multiplicity~1,
and the rest of the spectrum is negative real.  Theorem~\ref{septhm} implies that the spectrum
of any alternative dynamics~$\cL'=\cS'-cI$ with the same~$\cSin$ and heat bath interactions~$\xi_v$,
but \emph{arbitrary} heat bath temperatures~$\cH'$ 
(not necessarily uniform over $v\in V$) is the same as that of~$\cL$ (and hence real), even though $\cL'$ is
in general not reversible. Of course the steady-state distribution of $\cL'$ depends on~$\cH'$.

We observe also that, although the non-equilibrium dynamics shares a spectrum with the 
reversible dynamics~$\cL$, the spectrum is not as informative for general~$\cL'$: for example, 
the rate of convergence to stationarity in the reversible case is determined
by the spectrum (and bounded in terms of the spectral gap), but this
connection is lost in the absence of reversibility, where it is known that the convergence time of a Markov chain can be dramatically
longer than predicted by the spectrum for the reversible case~\cite{MehtaS20}.
We remark also that our proof of Theorem~\ref{septhm} uses a method based on ladder operators and Lax pairs
that in principle extends beyond the SEP to other boundary-driven dynamics.  In a later version of the
paper we will give examples of other settings in which spectral invariance holds, not necessarily for 
all heat bath temperatures but for a suitable model-dependent equivalence class of temperatures.

\section{Mixing time bounds}\label{sec:mixing}
In this section, we will prove the upper and lower bounds on mixing time summarized in Theorem~\ref{thm:mixing}.
In preparation for this, let us first confirm that the steady-state distribution is well defined.
\begin{proposition}\label{prop:basic}
For any SEP on a connected graph satisfying the non-triviality condition on the heat baths stated earlier,
the steady-state distribution~$\mu$ exists and is unique.  Moreover, if all heat baths are identical
(i.e., the 2-point probability distribution at every vertex~$v$ with $\xi_v>0$ is $h_v=(h(0),h(1))$, 
with $h(0)=1-h(1)\in (0,1)$), then the SEP is reversible w.r.t.\ the steady-state product distribution 
$\mu(\sigma)=\prod_{v\in V} h(\sigma(v))$.
\end{proposition}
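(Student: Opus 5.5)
The plan has two parts. First, existence and uniqueness of $\mu$: since the state space $\{0,1\}^V$ is finite, a stationary distribution of the continuous-time chain with generator $\cL$ always exists. So it suffices to show there is a unique closed communicating class. I will show that, under the non-triviality condition, some fixed configuration is reachable from every configuration. It then follows that there is exactly one recurrent class, and the stationary distribution is unique (supported on that class).

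For reachability, suppose first that $h_u(0)>0$ for some heat bath $u$. From any configuration $\sigma$, repeat the following while some particle remains. Take a particle at $w$. Using connectivity, move it along a path to $u$ by successive exchanges, each of positive rate. Then fire the bath clock at $u$ with outcome $0$, which has positive probability. The number of particles strictly decreases each round, so we reach the empty configuration $\mathbf 0$. A symmetric argument using a bath $v$ with $h_v(1)>0$ reaches the full configuration $\mathbf 1$. One absorbing-style target suffices for uniqueness, so either one will do. For the proposition I only need existence of some $u$ with $h_u(0)>0$ or some $v$ with $h_v(1)>0$, and this always holds. The stronger non-triviality assumption is what rules out absorption, giving full support in the identical-bath case.

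One subtlety: exchanges move contents, so moving a particle along a path may displace other particles. This is harmless. I only need one particle to land on $u$ and then be removed, and whatever particle sits at $u$ can be removed directly.

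Second, reversibility in the identical-bath case, where $h(0),h(1)\in(0,1)$. Here I check detailed balance $\mu(\sigma)q(\sigma,\sigma')=\mu(\sigma')q(\sigma',\sigma)$ for each elementary move.
\begin{itemize}
\item \textbf{Edge exchange.} It maps $\sigma$ to $\sigma^{uv}$ at rate $\eta_{\{u,v\}}$ in both directions, and the product measure is invariant under permuting coordinates, so $\mu(\sigma)=\mu(\sigma^{uv})$.
\item \textbf{Bath move at $v$.} It changes $\sigma(v)=a$ to $b\neq a$ at rate $\xi_v h(b)$. Then
\[
\mu(\sigma)\,\xi_v h(b)=\Big(\prod_{w\ne v}h(\sigma(w))\Big)h(a)h(b)\xi_v,
\]
which is symmetric in $a$ and $b$.
\end{itemize}
Since the total rate between two configurations is a sum over these moves, detailed balance holds. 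Hence $\mu$ is stationary and the chain is reversible with respect to it. Because $h$ is strictly positive, $\mu$ has full support. Combined with the first part, this shows it is the unique steady state; indeed the chain is irreducible, since reversibility with a full-support measure makes every reachable relation symmetric, and everything reaches $\mathbf 0$.

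The only mildly delicate point is bookkeeping in the reachability argument. Otherwise the proof is routine.
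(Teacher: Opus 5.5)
Your proof is correct. It uses the same two ingredients as the paper: moving contents along paths to a bath, and detailed balance checked move by move. Where it differs is in how the uniqueness argument is organized.

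The paper proves full irreducibility on $\{0,1\}^V$ for arbitrary non-trivial baths. It first clears every site to $0$ level-by-level along a spanning tree rooted at a bath $u$ with $h_u(0)>0$. It then fills any desired subset with $1$'s from a bath $v$ with $h_v(1)>0$, and concludes ergodicity. You instead show only that the single state $\mathbf 0$ is accessible from every configuration. That already forces a unique closed class, and hence a unique stationary distribution.

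Your route is more economical and more general. As you note, it needs only $V'\neq\emptyset$, so it also covers the degenerate all-occupied or all-empty baths, where $\mu$ is a point mass.

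What it gives up is irreducibility, equivalently full support of $\mu$, in the general non-equilibrium case. The paper's two-phase construction establishes this, and that is where the non-triviality assumption is actually used. You recover irreducibility only in the identical-bath case, through the observation that detailed balance with a strictly positive $\mu$ makes accessibility symmetric. The proposition as stated needs only uniqueness and the identical-bath reversibility. The paper's later appeals to it (ergodicity under the reference bath $h^{\text{ref}}$) also concern identical baths. So nothing is missing from your argument.
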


\begin{proof}
The SEP induces a continuous-time Markov chain on $\Omega:=\{0,1\}^V$, the set of possible
occupation vectors on the vertices of~$G$.  This chain will be ergodic if it is irreducible on~$\Omega$.

To see that it is irreducible, we show how to create any occupation vector~$\sigma$ on~$G$ from
any initial configuration. Let $u$ be a vertex with a heat bath satisfying $h_u(0) > 0$.  (Such a 
vertex exists by assumption.)   Construct a spanning tree rooted at~$u$.  We can first
set all values to~0 level-by-level in the tree as follows.  By exchanging with
the heat bath if necessary, ensure that $u$ has value~0, then perform a sequence of edge
exchanges on the path from~$u$ to a leaf to move this 0 value down to the leaf.  
Once all leaves are set to~0, we use the same procedure to set all values at the level
above the leaves to~0, and so on up to the root~$u$.

Then we select a vertex~$v$ with a heat bath satisfying $h_v(1) > 0$.  (Again, such a vertex
is guaranteed to exist; note that $v$ may be the same as~$u$.)  We construct a spanning tree
rooted at~$v$ and perform the same procedure as above to change the value of any desired
subset of vertices to~1, by repeatedly exchanging with the heat bath at~$v$ to set its value to~1
and then moving this value to the desired site in the tree level-by-level.

Finally, to see the claim for the case of identical heat baths, note that the SEP dynamics
restricted to non-heat-bath exchanges is reversible w.r.t.\ the product measure 
$\mu(\sigma)=\prod_{v\in V} h(\sigma(v))$, since these exchanges conserve the number
of occupied sites.  And reversibility for the heat bath exchanges is also immediate.
\end{proof}

We now proceed to the proof of Theorem~\ref{thm:mixing}, which quantifies the rate of convergence of the
dynamics to the steady-state distribution~$\mu$.  We will prove the upper and lower bounds, parts~(a) and~(b), separately.
 
\begin{proofof}{Theorem~\ref{thm:mixing}(a)}
Let $G=(V,E)$ be the underlying connected graph.
Recall that the states of the SEP can be represented as configurations $\sigma\in \{0,1\}^V$, where
$\sigma(v)=1$ (respectively, $\sigma(v)=0$) denotes the fact that vertex~$v$ is occupied (respectively,
unoccupied).  It will be convenient in what follows to adopt the equivalent view in which
there is one particle at every vertex $v\in V$, carrying the {\it label}~$\sigma(v)$.  When the clock on edge
$\{u,v\}$ rings, the particles at~$u,v$ (along with their labels) switch positions.  Heat bath exchanges are
implemented similarly, except that the label of a particle at the bath at~$v$ is always chosen from the
distribution specified by~$h_v$.

Let $\sigma_t,\tilde\sigma_t$ denote two copies of the dynamics, started in arbitrary initial configuration $\sigma_0, \tilde\sigma_0$.
We construct a coupling between these dynamics as follows:
\begin{itemize}
\item both $\sigma_t$ and $\tilde\sigma_t$ share the same Poisson clocks, so that an exchange on edge~$e$,
or with a heat bath at a vertex~$v$, occurs in~$\sigma_t$ iff the same exchange occurs in~$\tilde\sigma_t$;
\item if the exchange is with a heat bath at vertex~$v\in V'$, choose the same 0-1 label for~$v$ (a sample
from the heat bath distribution~$h_v$, independent of the history of the process) in both $\sigma_t$ and $\tilde\sigma_t$.
\end{itemize}
For any given initial configurations $\sigma_0,\tilde\sigma_0$, we define the {\it coupling time}~$\tcoup$ by $$
   \tcoup = \max_{\sigma_0,\tilde\sigma_0} \inf\{t: \sigma_t=\tilde\sigma_t\}.  $$
By standard machinery (see, e.g., \cite[Chap.~5]{Peresbook}), the mixing time of the chain is
bounded above by the tail of the coupling time, i.e.,
\begin{equation}\label{eqn:peres}
   \tmix(\varepsilon) \le \inf\{t:\Pr[\tcoup\ge t]\le\varepsilon\}.
\end{equation}

To analyze the coupling, it is helpful to attach to each particle in the current configuration a {\it color\/} 
from the set $\{\rm{red}, \rm{black}\}$.  Initially all particles are black.
Particles retain their colors as they move around the graph via exchanges, except that
whenever a particle is introduced by an exchange with a heat bath it is red. 
Note that the number of red particles in the configuration is non-decreasing (colors never
switch from red to black), and moreover the number of red particles increases by~1 each
time a black particle is exchanged with a heat bath.
Note also that the coupling ensures that the labels of all red
particles in the configurations $\sigma_t,\tilde\sigma_t$ agree.  Hence the coupling time is bounded above by the
time until all particles become red.

This time is easy to analyze.  Consider a particular black particle, initially at vertex~$v$.  This
particle will be replaced by a red particle as soon as it performs an exchange with a heat bath.  
Thus the time until this happens is just the hitting time from~$v$ to the set of heat
baths in the single-particle random walk defined earlier (where, in that walk, the particle is
absorbed).  Let $\hit$ denote the maximum expected hitting time from any vertex~$v$ to
the set of heat baths.
By Markov's inequality, and taking $k$ independent trials, we have that, for any given particle, $$
   \Pr[\hbox{\rm particle has not become red after $2k\hit$ steps}] \le 2^{-k}.  $$
Taking a union bound over all $n=|V|$ particles gives $$
   \Pr[\hbox{\rm any black particle remains after $2k\hit$ steps}] \le n2^{-k}.  $$
Setting $k = \log_2(n\varepsilon^{-1})$ for constant~$c$, we see that the coupling time exceeds
$2\hit\log_2(n\varepsilon^{-1})$ with probability at most~$\varepsilon$.  This completes the proof via~\eqref{eqn:peres}.
\end{proofof}

We now prove the complementary lower bound on the mixing time in Theorem~\ref{thm:mixing}(b).

\begin{proofof}{Theorem~\ref{thm:mixing}(b)}
The proof hinges on the following main lemma, in which we adopt the same labeled particle
view and red/black coloring convention as in the proof of part~(a) of the theorem.
\begin{lemma}\label{lem:lowerbd}
There exists a subset $S\subseteq V$ such that, at time
$t=\hit/(a\ln n)$ for some universal constant $a>0$, the expected fraction of black particles in~$S$ in the
configuration~$\sigma(t)$ is at least~$\frac{1}{20}$.
\end{lemma}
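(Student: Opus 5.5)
The plan has three steps: reduce the black-particle count to survival probabilities of a single killed random walk, relate $\hit$ to the bottom eigenvalue of that killed walk, and then pick $S$ using the Perron eigenvector.

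\emph{Step 1: reduction to one killed walk.} In the labeled-particle picture every particle performs exactly the single-particle walk defined before Theorem~\ref{thm:mixing}, since each edge clock moves whatever particle sits at its endpoint. Fix a vertex $w$ and follow the particle occupying $w$ at time $t$ backwards in time. The edge and bath rates are symmetric and Poisson clocks are time-reversible, so its backward trajectory on $[0,t]$ is again a copy of the single-particle walk started from $w$. The particle is black exactly when this backward path meets no heat-bath ring in $[0,t]$. Hence, writing $T$ for the absorption time of the single-particle walk,
\begin{equation}
\Pr[\text{particle at } w \text{ at time } t \text{ is black}] = P_w(T>t) =: f_t(w).
\end{equation}
The expected fraction of black particles in $S$ is therefore $|S|^{-1}\sum_{w\in S} f_t(w)$, and it suffices to find $S$ on which $f_t\ge 1/20$ pointwise.

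\emph{Step 2: relate $\hit$ to the bottom eigenvalue.} Let $K$ be the generator of the killed walk on $V$: off-diagonal entries $\eta_{\{u,v\}}$, with extra killing rate $\xi_v$ on the diagonal. $K$ is a symmetric sub-Markov generator, so $-K$ has a smallest eigenvalue $\lambda>0$. Because $G$ is connected, $K$ is irreducible and the Perron--Frobenius theorem gives an eigenvector $\varphi>0$ with $e^{tK}\varphi=e^{-\lambda t}\varphi$. Symmetry means the uniform measure is reversing, so by Cauchy--Schwarz
\begin{equation}
P_w(T>s)=\langle \delta_w, e^{sK}\one\rangle\le \Vert e^{sK}\delta_w\Vert_2\,\sqrt n\le \sqrt n\, e^{-\lambda s}.
\end{equation}
Integrating this bound, truncated at $1$, gives
\begin{equation}
\hit=\max_w E_wT\le \frac{\ln n}{2\lambda}+\frac1\lambda\le \frac{C\ln n}{\lambda}
\end{equation}
for $n\ge 2$ and a universal $C$. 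So for $t=\hit/(a\ln n)$ we get $\lambda t\le C/a$, which is as small a constant as we like once $a$ is chosen large.

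\emph{Step 3: choose $S$ via the Perron vector.} From the eigen-relation,
\begin{equation}
\varphi(w)=e^{\lambda t}E_w[\varphi(X_t);T>t]\le e^{\lambda t}\,\max\varphi\cdot f_t(w),
\end{equation}
so $f_t(w)\ge e^{-C/a}\varphi(w)/\max\varphi$. Take $S=\{w:\varphi(w)\ge \tfrac12\max\varphi\}$, which is non-empty. Every $w\in S$ then has $f_t(w)\ge \tfrac12 e^{-C/a}$, and choosing $a$ with $e^{-C/a}\ge 1/10$ gives $f_t\ge 1/20$ on $S$, which proves the lemma.

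The step needing the most care is Step 1: one must justify that the backward trajectory of the particle found at $w$ at time $t$ has the law of the forward killed walk from $w$. The cleanest route is to reverse the Poisson graphical construction on $[0,t]$; this uses that each exchange is an involution and the rates are symmetric. The spectral estimate in Step 2 is standard but loses the $\ln n$ factor, and that loss is exactly where the $\ln n$ in $t$ comes from. A single-vertex $S$ (the maximizer of $\varphi$) would already suffice; the larger level set may be useful when building the test function for Theorem~\ref{thm:mixing}(b).
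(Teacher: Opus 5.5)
Your proof is correct, and it takes a genuinely different route from the paper's.

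The paper stays in the forward picture. It slices $V$ into level sets $S_k=\{v:\tau(v)>(1-\alpha_k)\hit\}$ of the expected absorption time, with $\alpha_k=k/\log_c n$, and takes the first index $K$ with $|S_{K+1}|\le c|S_K|$. It then applies the Markov property at time $t$ to particles \emph{starting} in $T_K=S_K\setminus S_{K-1}$, showing they tend to remain black and inside $S=S_{K+1}$. The exclusion constraint (one particle per vertex) limits how many of them can have moved into the smaller, deeper level sets. There the $\ln n$ is the price of the slicing granularity.

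You instead look at the particle \emph{found} at a fixed site $w$ and use time reversal, which is exactly the duality of Proposition~\ref{prop:duality}, to get $\Pr[\text{particle at } w \text{ is black}]=P_w(T>t)$. After that only a single killed walk matters, and your $\ln n$ comes from the $\sqrt n$ in the $L^2$-to-$L^\infty$ bound. Step 1 also follows without pathwise reversal: the expected number of black particles at $w$ is $\sum_v (e^{tK})_{vw}=(e^{tK}\one)(w)$, because $K$ is symmetric. The paper's argument avoids linear algebra. Yours gives a pointwise bound on all of $S$ with much simpler bookkeeping.

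Your Step 1 in fact makes Steps 2--3 unnecessary and removes the logarithm. If $\tau(w^*)=\hit$, the Markov property at time $t$ gives $\hit\le t+P_{w^*}(T>t)\,\hit$, so $P_{w^*}(T>t)\ge 1-t/\hit$. Hence $S=\{w^*\}$ already works at $t=\hit/2$, with black fraction at least $1/2$. Plugged into the proof of Theorem~\ref{thm:mixing}(b), this yields $\tmix(\varepsilon)\ge(1-2\varepsilon)\hit$ with no $\ln n$ loss.
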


We proceed with the proof of the theorem and return to prove the lemma later.

Let $\mu$ denote the steady-state distribution of the SEP, and $p^{(t)}$ the distribution at time~$t$,
where the initial configuration is either $\sigma_0 = {\bf 1}$ or $\sigma_0 = {\bf 0}$, as will be specified below.
For any configuration~$\sigma$, let $f(\sigma)$ denote the fraction of particles in~$S$ with label~1
(equivalently, the fraction of vertices in~$S$ that are occupied).
We claim that, for a suitable choice of~$\sigma_0$, and $t=\hit/(a\ln n)$, we have
\begin{equation}\label{eq:expectn}
         |{\rm E}_{p^{(t)}} f - {\rm E}_{\mu} f | \ge \frac{1}{40},
\end{equation}         
where ${\rm E}_p f$ denotes the expectation of~$f$ wrt the distribution~$p$.  From~\eqref{eq:expectn}
and the fact that $f$ takes values in~$[0,1]$,
we immediately get the claimed bound on variation distance via $$
   \Vert p^{(t)} - \mu \Vert_{\rm TV} \ge  |{\rm E}_{p^{(t)}} f - {\rm E}_{\mu} f | \ge \frac{1}{40}.  $$
   
To establish~\eqref{eq:expectn}, write 
\begin{equation}\label{eq:convex}
   {\rm E}_{p^{(t)}} f = \begin{cases} F + G & \hbox{\rm if {$\sigma_0={{\bf 1}}$};}\cr
                                                          G & \hbox{\rm if {$\sigma_0={{\bf 0}}$}},\cr
   \end{cases}
\end{equation}
where, at time~$t$, $F$ is the expected fraction of particles in~$S$ that are black and $G$ is the expected fraction
of particles in~$S$ that are red and have label~1.  Lemma~\ref{lem:lowerbd} implies that $F\ge\frac{1}{20}$.
Since under these two choices of the initial configuration~$\sigma_0$ we obtain two values of ${\rm E}_{p^{(t)}} f$ that 
differ by~$F$, at least one of them differs from ${\rm E}_{\mu} f$ by at least $F/2\ge\frac{1}{40}$.  
Using this value for~$\sigma_0$ completes the proof of Theorem~\ref{thm:mixing}(b).
\end{proofof}

We now supply the proof of the lemma above.

\begin{proofof}{Lemma~\ref{lem:lowerbd}}
For any vertex $v\in V$, let $\tau(v)$ denote the expected hitting time in the single-particle random walk
from~$v$ to the set of heat baths. 
Thus $\hit=\max_v \tau(v)$.  For non-negative integers~$k$, define $\alpha_k:=\frac{k}{\log_c n}$, where $c>2$
is a constant to be specified later, and consider the increasing sequence of subsets $S_k\subseteq V$ defined by $$
   S_k := \left\{v\in V: \tau(v) > (1-\alpha_k)\hit\right\},\qquad 1\le k\le 1+\lfloor\log_c n\rfloor.  $$
Let $K$ be the smallest~$k$ for which $\frac{|S_{k+1}|}{|S_k|} \le c\,$; such a~$K$ must exist
since $|V|=n$.  We will take $S_{K+1}$ to be the set~$S$ whose existence is claimed in the Lemma, 
and denote by~$F$ the expected fraction of black particles in~$S$. Our goal is to show that $F\ge\frac{1}{20}$.

For $2\le k\le K+1$, define $T_k:=S_k\setminus S_{k-1}$, and $T_1:=S_1$.  Note that for any vertex $v\in T_k$ we have $$
    (1-\alpha_k)\hit < \tau(v) \le (1-\alpha_{k-1})\hit.  $$
On the other hand, for $v\in T_K$ we also have $$
    \tau(v) \le t + (1-p_0)(1-\alpha_{K+1})\hit + \sum_{r=0}^{K} q_r(1-\alpha_{K-r})\hit,   $$
where $p_0$ is the probability that the particle that was initially at~$v$, is black and lies inside $S=S_{K+1}$ at time~$t$,
and $q_r$ is the probability that the particle is black and lies in~$T_{K-r+1}$ at time~$t$.  Note that $p_0=\sum_{r=0}^K q_r $.
Combining the above two bounds yields $$
    (1-\alpha_K)\hit \le t + (1-p_0)(1-\alpha_{K+1})\hit + \sum_{r=0}^{K} q_r(1-\alpha_{K-r})\hit.  $$
Plugging in $t=\hit/(a\ln n)$ and using the fact that $p_0 = \sum_r q_r$, this becomes $$
    -\alpha_K \le \frac{1}{a\ln n}  -(1-p_0)\alpha_{K+1} - \sum_{r=0}^{K} q_r\alpha_{K-r}.  $$
Recalling that $\alpha_k=\frac{k}{\log_c n}$, we may rewrite this as $$
    -K \le \delta -(K+1)(1-p_0) - \sum_{r=0}^{K} q_r(K-r), $$
where $\delta:=(a\ln c)^{-1}$.  We assume in what follows that $K\ge 2$; if not, then a simpler version of 
the calculation below yields a better lower bound on~$F$.  Using $p_0 = \sum_r q_r$ again and rearranging gives $$
     p_0 \ge 1-\delta - \sum_{r=0}^{K} rq_r = 1 - \delta - q_1 - \sum_{r=2}^K rq_r = 1 - \delta -p_0 + q_0 - \sum_{r=2}^K (r-1)q_r,  $$
which, discarding the~$q_0$ on the RHS, implies
\begin{equation}\label{eq:pbound}
     p_0 \ge \frac{1}{2}\Bigl(1-\delta - \sum_{r=2}^K (r-1)q_r\Bigr) =\frac{1}{2}\Bigl(1-\delta - \sum_{r=2}^K p_r\Bigr),
\end{equation}     
where we have introduced the notation $p_r:=\sum_{j=r}^K q_j$ for the probability that the particle is black
and lies in~$S_{K-r+1}$ at time~$t$.  Note that this is consistent with our earlier definition of~$p_0$.
   
Consider the particles that are in $T_K$ at time $0$. Let $\hat p_r$ be the expected fraction of these particles
that at time~$t$ remain black and inside~$S_{K-r+1}$.  Note that our goal is a lower bound on~$\hat p_0$, which 
is the expected fraction of black particles remaining in~$S_{K+1}$.

Applying~\eqref{eq:pbound} to $v$ uniformly sampled in $T_K$ gives
\begin{equation}\label{eq:bbound}
    \hat p_0 \ge \frac{1}{2}\Bigl(1-\delta - \sum_{r=2}^K \hat p_r\Bigr).
\end{equation}    

Now we observe that, since each vertex contains only one particle at any time,
$\hat p_r \le \frac{|S_{K-r+1}|}{|T_K|}$.  Moreover, by our choice of~$K$,
$|S_{K-r+1}|\leq |S_K|c^{-(r-1)}$ for $r\ge 2$.  Also, $|T_K| = |S_K|-|S_{K-1}|\geq |S_K|(c-1)/c$.
Combining these observations we see that, for any $r\ge 2$,
$$
   \sum_{r=2}^K \hat p_r  \leq \frac{c}{c-1}\sum_{r=2}^K  c^{-(r-1)} 
           \leq \frac{c}{c-1} \times \frac{1}{c-1} = \frac{c}{(c-1)^2}.  $$

Plugging this bound into~\eqref{eq:bbound} yields
\begin{equation}\label{eq:bbound2}
   \hat p_0 \ge \frac{1}{2}(1-\delta -c(c-1)^{-2}).
\end{equation}   
Noting from our definition of~$K$ that 
$\frac{|T_K|}{|S|} = \frac{|S_K|-|S_{K-1}|}{|S_{K+1}|} \ge \frac{(1-1/c)|S_K|}{|S_{K+1}|} \ge \frac{c-1}{c^2}$,
and combining this bound with that on~$\hat p_0$ in~\eqref{eq:bbound2} yields the following lower bound on the desired
fraction~$F$: $$
    F\ge \hat p_0\frac{|T_K|}{|S|} \ge \frac{1}{2}(1-\delta -c(c-1)^{-2})\times \frac{c-1}{c^2}.  $$
Finally, setting $c=5$ and $\delta=\frac{1}{16}$ (which is achieved by setting the constant $a=16/\ln 5\approx 10$) 
results in $F\ge\frac{1}{20}$, as desired.
\end{proofof}

{\bf Examples:} {\it (i)}.\ \
In the classical 1-dimensional SEP, where
$G$ is the path with $n$ vertices labeled $1,\ldots,n$ and just two
heat baths attached to the endpoints~1 and~$n$, respectively, and all exchange rates
$\eta_{uv}=1$ and $\xi_1=\xi_n=1$, then clearly the maximum expected hitting time to
the heat baths is $\hit=\Theta(n^2)$, and hence Theorem~\ref{thm:mixing} gives upper
and lower bounds $\Omega(n^2/\log n)\le \tmix\le O(n^2\log n)$.  Thus for this special case
our upper bound matches asympototically the tight bound obtained in~\cite{Nestoridi}, 
while our lower bound is off by a factor of $\log^2 n$.

{\it (ii)}.\ \ More generally, let $G$ be a rectangular region of size $n_1\times\cdots\times n_d$ in the
$d$-dimensional lattice~${\Bbb Z}^d$, with $n_1\le\ldots\le n_d$, and suppose that all vertices on the external faces are 
heat baths.  If all edge and heat bath exchange rates are again~1, and $d$ is constant, then we get $\hit=\Theta(n_1^2)$
and therefore $\Omega(n_1^2/\log n_d)\le \tmix\le O(n_1^2\log n_d)$.
In particular, if $n_i=\bar n$ for all~$i$ (a cube) then $\Omega(\bar n^2/\log \bar n)\le \tmix\le O(\bar n^2\log \bar n)$.
The upper bound here agrees with the more precise bound obtained in~\cite{Salez23} for the special case where the 
heat bath temperatures are all equal. 
We stress again that these bounds hold no matter what the
heat bath temperatures are.  
We note also that analogous bounds hold even when, say, the heat baths reside
only on two parallel faces of the cube; this is a finite version of the scenario whose steady-state distribution was studied, for heat baths of uniform temperatures on both faces, by Spohn~\cite{Spohn83} in the statistical physics literature. 

{\it (iii)}.\ \ 
As a final example, we again consider the $d$-dimensional cube of side-length~$\bar n$, but now with two heat baths,
situated at opposite corners $(0,\ldots,0)$ and $(\bar n,\ldots,\bar n)$, as discussed in~\cite{Bodineauetal}.  In this case
the hitting time $\hit$ is $\Theta(\bar n^d)$ for all $d\ge 3$ and $\Theta(\bar n^2\log \bar n)$ for $d=2$ (see, e.g., 
\cite{AldousBrown,DoyleSnell}).  Once again, Theorem~\ref{thm:mixing}
implies that the mixing time is bounded above and below by these values up to a correction factor of $O(\log \bar n)$.

\section{Correlations in the steady-state distribution}\label{sec:correlations}
In this section we discuss the steady-state distribution~$\mu$ of the SEP on arbitrary graphs.
As discussed earlier, due to the lack of reversibility of the SEP dynamics (except in the
case of identical heat baths), $\mu$~is in general hard to determine.  A closed form for
the joint occupation probabilities of any set of sites
is available in the special case of the 1-dimensional SEP~\cite{Kingman69}, but for
more general graphs useful expressions even for pairwise correlations are elusive.

Our goal here is to prove Theorem~\ref{thm:kwise} stated earlier, which gives a polynomial
time algorithm for computing the joint occupation probabilities of any $k$ sites for any
fixed~$k$ in an arbitrary connected graph with arbitrary heat baths.  
The algorithm follows almost immediately from our Theorem~\ref{thm:mixing}(a)
combined with a powerful duality property of the SEP.  This duality property for the
standard SEP (without heat baths) is a classical result due to Liggett~\cite{Liggett85}.
Here we extend that duality property to include arbitrary heat baths.  We believe that this 
extension is folklore but are not aware of a published version; however, Kipnis 
{\it et al.}~\cite{KipnisMP82} derive an analogous result for a different exchange process.

To state the duality property, we again recast the SEP in terms of an exchange process
involving labeled particles, as in the previous section.  However, we now extend the label
set to $\{0,1\}\cup V$, i.e., we view the configuration at time~$t$ as a vector 
$\sigma_t\in (\{0,1\}\cup V)^V$.  The process is initialized with $\sigma_0(v)=v$ for all $v\in V$.
When the Poisson clock on an edge $\{u,v\}$ rings, the labeled particles at $u$ and $v$ are exchanged.  
When a heat bath clock at vertex~$v$ rings, the particle at $v$ is exchanged for one with 
label in $\{0,1\}$, sampled (independently of all prior events) from the distribution~$h_v$.
Thus as the process evolves, $\sigma_t(v)$ will be either the $\{0,1\}$ value that the particle
currently at~$v$ received at its {\it last\/} exchange with a heat bath, or 
the initial position $u\in V$ of that particle if no such exchange has yet occurred.

We note that, as $t\to\infty$, we will have $\sigma_t(v)\in \{0,1\}$ for
all~$v$ a.s.\ since the hitting times from all vertices to the set of heat baths
are finite a.s. Moreover, the distribution of $\sigma_t$ will approach the steady-state
distribution~$\mu$ as $t\to\infty$.
Note also that we can easily recover the original representation of the SEP from this
representation by replacing any remaining vertex label~$v$ by the initial
$\{0,1\}$ value at vertex~$v$ in the original SEP.

Next we define the associated {\it dual process}.  
As in the above process, the configuration of the dual process at time~$t$ is
a vector $\hat\sigma_t \in (\{0,1\}\cup V)^V$ that assigns values in
$\{0,1\}\cup V$ to vertices.  However, the meaning of these values is
now different.  Specifically, $\hat\sigma_t(v)$ records either the current position $u\in V$
of the particle that started at~$v$ if that particle has not yet undergone
an exchange with a heat bath, or the $\{0,1\}$ value that that particle 
acquired in its first heat bath exchange.  
As soon as $\hat\sigma_t(v)$ acquires a $\{0,1\}$ value, that value remains
for all future time: thus we can think of particles as being absorbed upon their
first exchange with a heat bath.  As in the previous process,
the initial configuration is $\hat\sigma_0(v)=v\;\forall v$.
Note that, as for the SEP process defined above, as $t\to\infty$ we have 
$\hat\sigma_t(v)\in \{0,1\}$ a.s.\ for all $v\in V$.

The following claim is almost immediate from the above definitions:
\begin{proposition} \label{prop:duality}
For any subset of sites $\{v_1,\ldots,v_k\}\subset V$, the SEP process~$\sigma_t$ 
and its dual~$\hat\sigma_t$ are related as follows:
\begin{equation*}
   \Pr[\sigma_t(v_1,\ldots,v_k)=(z_1,\ldots,z_k)] = 
   \Pr[\hat\sigma_t(v_1,\ldots,v_k)=(z_1,\ldots,z_k)]\quad\forall (z_1,\ldots,z_k)\in(\{0,1\}\cup V)^k.  
\end{equation*}   
\end{proposition}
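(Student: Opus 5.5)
The approach is the graphical (Harris) construction combined with time reversal of the Poisson clocks on a fixed window $[0,t]$. I will realize both processes as deterministic functions of one random input $\omega$. This input consists of the Poisson ring times on every edge $\{u,v\}$ (rate $\eta_{\{u,v\}}$) and at every bath vertex $v\in V'$ (rate $\xi_v$) in $[0,t]$. Each bath ring is also marked with an independent sample from $h_v$. The time-reversal map $R:s\mapsto t-s$, applied to all ring times and keeping the marks, preserves the law of $\omega$. This is because independent homogeneous Poisson processes on $[0,t]$ are invariant under reflection, and the marks are i.i.d.\ and independent of the times. So it suffices to show that, for every realization $\omega$ and all $v_1,\dots,v_k$, we have $\sigma_t(v_1,\dots,v_k)[\omega]=\hat\sigma_t(v_1,\dots,v_k)[R\omega]$. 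Taking probabilities then gives the proposition.

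First I will describe the forward process pathwise. Fix $\omega$ and a site $v$. Trace the particle that occupies $v$ at time $t$ backwards in time: move along an edge whenever that edge rings at the current backward position, and stay put otherwise. Because every edge ring acts as a transposition of sites, backward tracing from the $k$ distinct sites $v_1,\dots,v_k$ gives $k$ paths that occupy distinct sites at every time. If the backward path from $v$ meets a bath ring (at its current site) before reaching time $0$, then $\sigma_t(v)$ is the mark of the first such ring met. That ring is the last bath exchange of the particle in forward time. If no bath ring is met, then $\sigma_t(v)=u$, where $u$ is the endpoint of the path at time $0$.

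Next, for the dual driven by $R\omega$, I will run the particle started at $v$ forward in time. This reproduces exactly the reversed path above: the edge rings are met in reversed order, and each ring moves the particle across the edge it occupies. The first bath ring it meets under $R\omega$ is therefore the ring met first when tracing backward under $\omega$, and it carries the same mark. Hence $\hat\sigma_t(v)[R\omega]$ equals the same $\{0,1\}$ value, or else the same position $u$ at time $t$. One point needs checking: in the dual, an absorbed particle stops being tracked, but its site is still hit by later edge rings. I will handle this by defining the trajectories of unabsorbed dual particles via the reversed permutation paths. An absorbed particle plays no further role in the values $\hat\sigma_t(v_j)$, and every other particle's trajectory is still determined by the ring times alone.

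The main obstacle is the joint, rather than marginal, statement. I must make sure that distinct $v_j$ never share a bath mark inconsistently, i.e.\ that two paths cannot use the same bath ring. Disjointness of the backward paths settles this: at a bath ring at site $w$, at most one of the $k$ paths is at $w$. So each mark is read by at most one coordinate in both the forward and the dual description, and the coordinatewise identity holds jointly for every $\omega$. Equality in law over $(\{0,1\}\cup V)^k$ then follows from $R\omega\overset{d}{=}\omega$.
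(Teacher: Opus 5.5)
Your proposal is correct and is essentially the paper's proof. The paper couples the two processes by feeding the dual the reversed exchange sequence $e_m,\ldots,e_1$ with the same heat-bath samples, which gives the same time-$t$ dual configuration as your reflection $s\mapsto t-s$, and it pairs the particle ending at $v_i$ in $\sigma_t$ with the dual particle starting at $v_i$ exactly as you do; you are simply more explicit about why the reversal preserves the law of clocks and marks, and your disjointness step for the joint statement is harmless but unnecessary, since $\sigma_t(v)[\omega]=\hat\sigma_t(v)[R\omega]$ already holds pathwise for all $v$ simultaneously.
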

\begin{proof} Couple the evolutions of $\sigma_t,\hat\sigma_t$ as follows: let the
sequence of edge exchanges up to time~$t$ in $\sigma_t$ (including heat bath exchanges)
be denoted $e_1,\ldots,e_m$, occurring at times $t_1,\ldots,t_m$, and augment this
sequence with the heat bath samples chosen at each heat bath exchange.  Define
the dual evolution $\hat\sigma_t$ by using the reversed sequence $e_m,\ldots,e_1$
at times $t_1,\ldots,t_m$, along
with the same heat bath samples at corresponding steps.  This coupling pairs up
the particle in~$\sigma_t$ that ends at position~$v_i$ after $t$ steps with the
particle in $\hat\sigma_t$ that starts at~$v_i$.  By definition of the processes, if one of these
particles has a $\{0,1\}$ label then the other has the same label; and if not, then both 
carry the same label~$u$ corresponding to the common initial position of the first
particle and final position of the second particle.
\end{proof}

Note that, despite this duality, the limiting behaviors of $\sigma_t$ and $\hat\sigma_t$ are different. In
$\hat\sigma_t$, for all $\{v_1,\ldots,v_k\}\subset V$ there is a.s.\ a time $t$ such that
$\hat\sigma_t(v_1,\ldots,v_k)\in \{0,1\}^k$ and for all $t'\geq t$, 
$\hat\sigma_{t'}(v_1,\ldots,v_k)=\hat\sigma_t(v_1,\ldots,v_k)$. On the other hand, in $\sigma_t$, 
for all $\{v_1,\ldots,v_k\}\subset V$ the distribution of $\sigma_t(v_1,\ldots,v_k)$ converges to a
distribution supported on $\{0,1\}^k$.  The content of Proposition~\ref{prop:duality} is that this limit distribution
is the same as that of the fixed point of $\hat\sigma_t$.

\begin{remark}
Proposition~\ref{prop:duality} includes Liggett's Theorem 1.1 in~\cite[Chapter VIII]{Liggett85} as a special case,
where there are no heat baths.  In that case all particle labels are in~$V$, and the proposition
says that the sequence of particles that arrive at
$v_1,\ldots,v_k$ at time~$t$ under the SEP is the same as the sequence of
final locations of particles starting at $v_1,\ldots,v_k$ under the dual process.
This gives Liggett's theorem, once we identify the labels of the particles with their
occupied/unoccupied status in the initial configuration.
\end{remark}

We are now in a position to prove Theorem~\ref{thm:kwise} stated earlier.
\begin{proofof}{Theorem~\ref{thm:kwise}}
Fix a set of $k$ sites $v_1,\ldots,v_k$ and any desired $\varepsilon\in(0,1)$.  
From the proof of Theorem~\ref{thm:mixing}(a) 
we see that, for any initial configuration~$\sigma_0$, the distribution of the configuration~$\sigma_t$
of the SEP at time $t=\hit\log_2(n\varepsilon^{-1})$ is within total variation distance~$\varepsilon$ of
the steady-state distribution~$\mu$.  This implies that the same holds for the distribution
of $\sigma_t(v_1,\ldots,v_k)$ in the alternative representation of Proposition~\ref{prop:duality},
since red particles (in the language of the proof of Theorem~\ref{thm:mixing}(a)) are precisely
the particles with $\{0,1\}$ labels in Proposition~\ref{prop:duality}.
By that proposition, this distribution is identical to that of $\hat\sigma_t(v_1,\ldots,v_k)$.
Now the crucial observation is that the dual process~$\hat\sigma_t$, restricted to
the subset $\{v_1,\ldots,v_k\}$, is also a Markov chain whose state space has size $(n+2)^k$,
since it just involves tracking the motions of $k$ particles starting at $v_1,\ldots,v_k$.
(Note that, crucially, the same is {\it not\/} true of the restriction of the original SEP~$\sigma_t$.)
Thus we can compute the distribution of this restriction at time~$t$ by repeated squaring
of its transition matrix in time $n^{O(k)}\log t$.  Setting $t=\hit\log_2(n/\varepsilon)$
completes the proof.
\end{proofof}

\section{Spectrum invariance}\label{sec:spectral}
In this section we prove our spectral result, Theorem~\ref{septhm}.
We begin by describing a general method based on ladder operators and Lax pairs.

Recall that our goal is to compare the spectra of two rate matrices $\cS,\cS'$ that share the
same internal dynamics~$\cSin$ and heat bath interactions~$\xi_v$
but have different heat bath temperatures encoded in the matrices $\cH,\cH'$, respectively.
To this end, define \be A(v)=H'(v)-H(v) = \one \cdot a_v, \label{avmless} \ee
where $a_v=h'_v-h_v$, and form
\begin{align} 
\cA &= \cH' - \cH
= \sum_{v \in V} \xi_{v} \cdot (\one \cdot a_v) \otimes I(V\backslash \{v\}).
 \label{A} \end{align}
Note that all row sums of~$\cA$ are zero.
Since $\cS' = \cS+\cA$, our goal is to show that $\cS$ and $\cS+\cA$ are co-spectral. 
The main tool is this:

\begin{lemma} If there exists a matrix $\cT$ such that 
\begin{align} [\cS,\cT]&=0 \label{STcomm}; \\
 [\cA,\cT]&=c \cA \quad \text{ for real $c \neq 0$. } \label{ATcomm} \end{align}
 then $\spec(\cS+\cA)=\spec(\cS)$. \label{STA} \end{lemma}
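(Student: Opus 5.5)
The key idea is to use $\cT$ to generate a one-parameter family of similarity transformations that rescale $\cA$ while leaving $\cS$ fixed. For real $s$, set $U_s := e^{s\cT}$, which is invertible with inverse $e^{-s\cT}$. Condition~\eqref{STcomm} gives $U_s\cS U_s^{-1}=\cS$.

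For $\cA$, define $f(s):=e^{s\cT}\cA e^{-s\cT}$. Differentiating,
\[
f'(s)=e^{s\cT}[\cT,\cA]e^{-s\cT}=-c\,f(s),
\]
using~\eqref{ATcomm} in the form $[\cT,\cA]=-c\cA$. Since $f(0)=\cA$, uniqueness of solutions to this linear ODE yields $f(s)=e^{-cs}\cA$. Equivalently, one can expand $e^{s\,\mathrm{ad}_\cT}(\cA)=\sum_j \frac{s^j}{j!}\mathrm{ad}_\cT^j(\cA)$ and note that $\mathrm{ad}_\cT^j(\cA)=(-c)^j\cA$.

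Combining the two,
\[
U_s(\cS+\cA)U_s^{-1}=\cS+e^{-cs}\cA.
\]
So $\spec(\cS+\lambda\cA)=\spec(\cS+\cA)$ for every $\lambda\in(0,\infty)$, because $\lambda=e^{-cs}$ ranges over all of $(0,\infty)$ as $s$ ranges over $\rset$, using $c\neq 0$ real.

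Now let $\lambda\to 0^+$. The characteristic polynomial $\det(zI-\cS-\lambda\cA)$ has coefficients that are polynomial in $\lambda$. On $(0,\infty)$ this polynomial in $(z,\lambda)$ does not depend on $\lambda$, so each coefficient is a polynomial in $\lambda$ that is constant on an interval, hence constant everywhere. Setting $\lambda=0$ gives $\det(zI-\cS-\cA)=\det(zI-\cS)$, so the multisets of eigenvalues coincide. This is the continuity-of-eigenvalues step mentioned in the introduction, made exact through polynomial identity.

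The only delicate point is justifying the conjugation identity cleanly. Either the ODE-uniqueness argument or the series expansion of $e^{s\,\mathrm{ad}_\cT}$ works, since all matrices are finite-dimensional. No further obstacle is expected.
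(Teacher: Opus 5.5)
Your proof is correct, and its core is the same as the paper's. Conjugating by $e^{s\cT}$ fixes $\cS$ and rescales $\cA$ to $e^{-cs}\cA$. The paper takes $s=r/c$ and verifies the identity with the Campbell--Baker--Hausdorff series; your ODE-uniqueness argument is equivalent.

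The genuine difference is the last step. The paper lets $r\to\infty$ in $\cS+e^{-r}\cA$ and invokes continuity of the spectrum as a function of the matrix, citing Bhatia. You instead note that each coefficient of $\det(zI-\cS-\lambda\cA)$ is a polynomial in $\lambda$. Each is constant on $(0,\infty)$, hence constant everywhere, and you evaluate at $\lambda=0$.

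Your route is purely algebraic and self-contained. It gives equality of characteristic polynomials directly, so equality of spectra as multisets (with multiplicities) is immediate; you do not need the matching-distance form of eigenvalue continuity. It also yields the slightly stronger conclusion that $\spec(\cS+\lambda\cA)=\spec(\cS)$ for every complex $\lambda$, not just along the positive ray. The paper's limiting argument is shorter to state given the cited continuity theorem, and it matches the Lax-pair picture in its remark. For this lemma, nothing is lost by your version.
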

Property~\ref{ATcomm} states that $\cA$ is a {\it ladder operator\/} for $\cT$ with increment $c$
(so-called because if $v$ is a left eigenvector of $\cT$ with eigenvalue $\lambda$, then 
$(v \cA) \cT = v (\cT \cA +  [\cA,\cT]) =(\lambda+c) v \cA$, i.e., $v \cA$ is a left eigenvector with eigenvalue $\lambda+c$.) 
Since $\cS,\cA,c$ are real, $\cT$ may be assumed w.l.o.g.\ real.
 
\begin{proof} Consider the one-parameter family of matrices $M(r):=e^{(r/c) \cT}(\cS+\cA)e^{-(r/c) \cT}$ ($r$ real). 
Clearly these matrices are co-spectral. We claim that
\begin{equation}\label{eq:clm}
M(r)=\cS+e^{-r}\cA.
\end{equation}
This will complete the proof of the Lemma, since it immediately implies that $\lim_{r \to \infty} M(r)=\cS$,
which in turn implies the Lemma because the spectrum $\spec(M)$ of a square matrix $M$ is continuous 
in~$M$ (in Hausdorff distance)~\cite[VIII.3]{Bhatia97}.

To verify~\eqref{eq:clm}, note first that, due to property~\eqref{STcomm}, 
\be e^{(r/c)\cT}\cS e^{-(r/c) \cT}=\cS. \label{tst} \ee
Next, for square matrices $C,D$, define the iterated commutators
\begin{align} [^0C,D]&=D; \\ [^nC,D]& =[C,[^{n-1}C,D]] \quad \text{ for } n\geq 1. \end{align} 
The Campbell-Baker-Hausdorff formula~\cite{Sternberg04,FSchur1889} gives
\begin{align}\label{eq:CBH} e^C D e^{-C} = \sum_{n=0}^\infty \frac{1}{n!} [^nC,D]. \end{align}
Now property~\eqref{ATcomm} implies that $[^n\cT,\cA]=(-c)^n \cA$ for $n\geq 0$,
so $[^n(r/c) \cT,\cA]=(-r)^n \cA$.  Thus, applying~\eqref{eq:CBH} to $e^{(r/c) \cT}\cA e^{-(r/c) \cT}$,
we obtain
\begin{align} e^{(r/c)\cT}\cA e^{-(r/c)\cT} &= e^{-r} \cA, \label{tat} \end{align}
which together with~\eqref{tst} completes the verification of~\eqref{eq:clm} and hence the proof of the Lemma.
\end{proof}

\begin{remark} An alternative way to argue this Lemma would be to \emph{define} $M(r)$ by~\eqref{eq:clm}, and then to show that $M(r)$ and~$\cT$
form a {\it Lax pair}~\cite{Lax68}, since properties~\eqref{STcomm} and~\eqref{ATcomm} ensure that $\frac{d}{dr}M(r) =\frac{1}{c} [\cT,M(r)]$; this
in turn implies the claimed spectral invariance property.
\end{remark}

\begin{remark}
One cannot prove Lemma~\ref{STA} by showing (as one might na\"ively have hoped) that $\cS+\cA$ and $\cS$ are similar matrices. 
It is even possible for $\cS$ to be reversible (and thus diagonalizable), while $\cS+\cA$ is non-diagonalizable.
A simple concrete example is a three-site SEP on a path, with $V=\{v_1,v_2,v_3\}$, $\eta_{\{v_1,v_2\}}=\eta_{\{v_2,v_3\}}=1$,
and $\xi_{v_1}=\xi_{v_3}=1$ (i.e., a heat bath at each endpoint).  For~$\cS$ we take the same arbitrary positive reference heatbath $h=\hrf$
at both endpoints, and consider the alternative heat baths~$h'$ given by $h'_{v_1}=(1,0)$ and $h'_{v_3}=(0,1)$.  Here it can be checked that
$\spec(\cS+\cA)=\spec(\cS)=(4,2,2,2,2\pm \sqrt{2},1\pm \sqrt{3})$, 
as predicted
by Theorem~\ref{septhm}.  However, $\cS$ is reversible and hence diagonalizable, but 
$\cS+\cA$ has a $2 \times 2$ Jordan block within the three-dimensional space associated with eigenvalue~$2$.
\end{remark}

In light of Lemma~\ref{STA}, to prove spectral independence for the SEP as stated in Theorem~\ref{septhm}, 
it suffices to verify the hypotheses~\eqref{STcomm},~\eqref{ATcomm} of Lemma~\ref{STA} for the SEP.
To do this, it will be helpful to replace these conditions with local conditions that are easier to verify.

\begin{lemma} 
Consider the dynamics $\cS=\cSin+\cH$ with heat bath~$\cH$, as in~\eqref{Sin} and~\eqref{Hh},
and let $\cA=\cH'-\cH$ for some other heat bath~$\cH'$ as in~\eqref{A}.  Let $T$ be a $\{0,1\} \times \{0,1\}$ matrix such that 
\begin{align} [S,T\otimes I + I \otimes T]&=0 \label{STassump}; \\
\forall v \; [H(v),T]&=0; \label{HTassump} \\
\forall v \; [A(v),T]&=cA(v), \label{ATassump}
\end{align}
and define
\be \cT=\sum_{v \in V} T(v) \otimes I(V\backslash \{v\}), \label{T} \ee
where $T(v)$ is the operator applying $T$ at site $v$. 
Then~\eqref{STcomm} and~\eqref{ATcomm} hold.
\label{liftLemma} 
\end{lemma}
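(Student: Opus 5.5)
The plan is to use bilinearity of the commutator together with the fact that operators acting on disjoint sets of sites commute. Throughout, I write $X(U)$ for an operator $X$ acting on the sites in $U$, tensored with the identity on $V\setminus U$. Expanding $\cS=\cSin+\cH$ and $\cT=\sum_w T(w)$ reduces everything to commutators of single local terms with single-site terms $T(w)$. Three facts then do all the work.

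\emph{Step 1.} For an edge $\{u,v\}$ and a site $w\notin\{u,v\}$, the operators $S(\{u,v\})$ and $T(w)$ act on disjoint tensor factors, so $[S(\{u,v\}),T(w)]=0$. The two remaining terms, $w=u$ and $w=v$, combine into $(T\otimes I+I\otimes T)$ acting on the pair $\{u,v\}$, tensored with $I(V\setminus\{u,v\})$. By \eqref{STassump}, $S$ commutes with $T\otimes I+I\otimes T$, and this is preserved after tensoring with the identity. Hence $[S(\{u,v\}),\cT]=0$ for every edge, and summing with the weights $\eta_{\{u,v\}}$ gives $[\cSin,\cT]=0$.

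\emph{Step 2.} For the heat-bath part, $[H(v)\otimes I(V\setminus\{v\}),T(w)]=0$ when $w\neq v$, again by disjointness. When $w=v$ the commutator is $[H(v),T]\otimes I(V\setminus\{v\})=0$ by \eqref{HTassump}. So $[\cH,\cT]=0$, and together with Step 1 this gives \eqref{STcomm}.

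\emph{Step 3.} By the same disjointness argument, in $[\cA,\cT]=\sum_v\xi_v\sum_w[A(v)\otimes I,T(w)]$ only the terms with $w=v$ survive. Each of these equals $[A(v),T]\otimes I=cA(v)\otimes I$ by \eqref{ATassump}. Summing with the weights $\xi_v$ gives $[\cA,\cT]=c\cA$, which is \eqref{ATcomm}. Note that $c$ must be the same constant for all $v$, which is how \eqref{ATassump} is stated.

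The only step needing care is the bookkeeping in Step 1: identifying $T(u)+T(v)$, restricted to the pair, with $T\otimes I+I\otimes T$ under the same ordering of tensor factors that defines $S(\{u,v\})$. Since $T\otimes I+I\otimes T$ is symmetric under swapping the two factors, the ordering does not matter. Everything else is routine, using the identity $[X\otimes I,Y\otimes I]=[X,Y]\otimes I$ and the commutation of operators supported on disjoint factors.
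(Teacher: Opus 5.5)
Your proof is correct and follows the paper's argument essentially step for step. Both discard the commutators of each local term with $T(w)$ at sites outside its support, and then apply the three local hypotheses to the surviving terms. Your remark that $T\otimes I+I\otimes T$ is symmetric under swapping the two factors, so the ordering used in $S(\{u,v\})$ does not matter, makes explicit a piece of bookkeeping that the paper leaves implicit.
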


\begin{proof} 
To verify~\eqref{STcomm}, observe first that for $w \notin \{u,v\}$, $[S(\{u,v\}) \otimes I, I \otimes I \otimes T(w)]=0$. Consequently $[\cSin,\cT]=\sum_{\{u,v\}} \eta_{\{u,v\}} [S(\{u,v\}) , T(u)\otimes I + I \otimes T(v)] \otimes I(V\backslash \{u,v\})=0$ by~\eqref{STassump}.  Next, observe that for $w\neq v$, $[H(v)\otimes I, I\otimes T(w)]=0$. So
$[\cH,\cT] = 
\sum_{v \in V} \xi_{v} \cdot [H(v), T(v)] \otimes I(V\backslash \{v\}) =0$ by~\eqref{HTassump}. 

Putting these two observations together gives $[\cS,\cT]=[\cSin,\cT]+[\cH,\cT] = 0$, which is~\eqref{STcomm}.

To verify~\eqref{ATcomm}, note that for $u\neq v$, $[A(u)\otimes I,I\otimes T(v)]=0$. Consequently we have
$[\cA,\cT]=\sum_v \xi_v [A(v),T(v)] \otimes I(V\backslash \{v\}) = \sum_v \xi_v c A(v) \otimes I(V\backslash \{v\}) = c \cA$ by~\eqref{ATassump}.
\end{proof}

\begin{remark}
We note that the framework above for proving spectral equivalence is general and provides
sufficient conditions that may be satisfied by a broader class of processes.  Below we prove that
the conditions in Lemma~\ref{liftLemma} are satisfied by the SEP.  In a future version of this paper,
we will give some further examples of graph processes, of the general form in~\eqref{cSdef} with
internal dynamics~$\cSin$ and heat baths~$\cH$, that also satisfy these conditions, for a suitable
class of heat bath differences~$\cA$. 
\end{remark}

We are now in a position to prove Theorem~\ref{septhm}.

\begin{proofof}{Theorem~\ref{septhm}}
Theorem~\ref{septhm} claims spectrum equivalence of two arbitrary SEP dynamics
$\cS=\cSin+\cH$ and $\cS+\cA$, where $\cA=\cH'-\cH$.  Rather than prove this equivalence
directly, we instead show equivalence of the dynamics~$\cS+\cA$ with a fixed reference
dynamics~$\cS$.  Since any SEP dynamics with the same internal dynamics~$\cSin$ can be 
represented as~$\cS+\cA$, this establishes the theorem.

In the reference dynamics we will set the vertex heat baths to be $h_v=\hrf=(1/2,1/2)$ for all~$v$.

We begin by noting, via a short verification employing~\eqref{exchop}, that the
internal dynamics operator~$S$ in the SEP
satisfies the following balance condition for all $x_1,x_2,y_1,y_2\in Z$:
\be
\sum_{z} (S_{x_1,x_2}^{y_1,z}+S_{x_1,x_2}^{z,y_2}
-S_{z,x_2}^{y_1,y_2} - S_{x_1,z}^{y_1,y_2})  = 0
\label{T1cond1}
\ee

Now set $T=-\frac{1}{2}\one \cdot \one^\da$. 
In view of Lemmas~\ref{STA} and~\ref{liftLemma}, the proof will follow from showing~\eqref{STassump}--\eqref{ATassump}. 
First,~\eqref{STassump}:
\begin{align*} 
(S(T\otimes I + I \otimes T))_{x_1,x_2}^{y_1,y_2}
&=-\frac{1}{2} \sum_{z} (S_{x_1,x_2}^{z,y_2}+S_{x_1,x_2}^{y_1,z}); \\ 
 ((T\otimes I + I \otimes T)S)_{x_1,x_2}^{y_1,y_2}
 &=-\frac{1}{2}\sum_{z} (S_{z,x_2}^{y_1,y_2} + S_{x_1,z}^{y_1,y_2}), \\ 
 \noalign{and therefore, applying~\eqref{T1cond1},}
 [S,T\otimes I + I \otimes T]_{x_1,x_2}^{y_1,y_2}
 &=-\frac{1}{2} \sum_{z} (S_{x_1,x_2}^{y_1,z}+S_{x_1,x_2}^{z,y_2}
-S_{z,x_2}^{y_1,y_2} - S_{x_1,z}^{y_1,y_2}) =0.
\end{align*}

Now, for~\eqref{HTassump}, set 
\be H(v)=\one \cdot \hrf=\frac12 \one \cdot \one^\da \label{H'ref} \ee
for every $v$. Then $[H(v),T]=\frac{-1}{4}[\one \cdot \one^\da,\one \cdot \one^\da]=0$. 

Finally, for~\eqref{ATassump}, we note that since every heat bath $h_v$ is a 2-point probability distribution, 
every heat bath difference is of the form $A(v)= \one \cdot a_v$ where $a_v=(c,-c)$ for some $c\in\rset$. 
With our choice of~$T$, we therefore have
\begin{align*} [A(v),T]
&=-\frac{1}{2}((\one \cdot a_v \cdot \one \cdot \one^\da) - (\one \cdot \one^\da \one \cdot a_v)) \\
&= \frac{1}{2}((\one \cdot 0 \cdot \one^\da)-2(\one \cdot a_v)) \\
&= \one \cdot a_v \\ &= A(v),
\end{align*}
which establishes~\eqref{ATassump} with $c=1$.  This completes the proof.
\end{proofof}

\section*{Acknowledgments}
Thanks to Eldad Afik for helpful conversations.
\bibliographystyle{plainurl}
\bibliography{refs}

\end{document}